\newcommand{\RNum}[1]{\uppercase\expandafter{\romannumeral #1\relax}}
\DeclarePairedDelimiter \abs{\lvert}{\rvert} % \abs{...}, \abs*{...}, \abs[\Big]{...}, etc.
\DeclarePairedDelimiterX \ip[2]{\langle}{\rangle}{#1,#2}
\DeclarePairedDelimiterXPP \Prob[1]{\mathbb{P}}\{\}{}{
\DeclarePairedDelimiterXPP \Probevent[1]{\mathbb{P}}(){}{
\DeclareMathOperator{\E}{\mathbb{E}}
\def \R {\mathbb{R}}
\pgfplotsset{compat=1.17}
\numberwithin{equation}{section}
\titleformat{\section}[runin]{\bfseries}{\thesection.}{3pt}{}[.]
\begin{document}

\title[Improving discrepancy by moving a few points]{Improving discrepancy by moving a few points}

\author{Gleb Smirnov}
\address{
Mathematical Sciences Institute, 
Australian National University, Canberra, Australia}
\email{gleb.smirnov@anu.edu.au}

\author{Roman Vershynin}
\address{Department of Mathematics, University of California, Irvine, U.S.A.}
\curraddr{}
\email{rvershyn@uci.edu}

\thanks{R.V.~is partially supported by NSF Grant DMS 1954233 and NSF+Simons Research Collaborations on the Mathematical and Scientific Foundations of Deep Learning}

%\dedicatory{}

\begin{abstract}

We show how to improve the discrepancy of an iid sample by moving only a few points. Specifically, modifying \( O(m) \) sample points on average reduces the Kolmogorov distance to the population distribution 
to \(O(1/m)\).
\end{abstract}

\maketitle

\setcounter{section}{0}

\section{Main results}\label{intro}

Let \( X_1, \dots, X_n \) be an iid sample from a distribution on $\R$ with cumulative distribution function \( F(x) \). The empirical distribution function of the sample is defined as
\[
F_n(x) = \frac{1}{n} \sum_{i=1}^{n} \mathbf{1}_{\{X_i \leq x\}},
\quad x \in \R.
\]
The sample discrepancy \( D_n \) is defined as the Kolmogorov distance between the empirical and population distributions: 
\[
D_n = D_n(X_1,\ldots, X_n) = \sup_x \abs{F_n(x) - F(x)}.
\]
Dvoretzky–Kiefer–Wolfowitz inequality (DKW) implies that $D_n = O(1/\sqrt{n})$ with high probability \cite{D-K-W, Massart}. Our main result shows that the discrepancy can be significantly reduced by carefully changing a small fraction of the sample:

\begin{theorem}[Improving discrepancy]\label{1sample}
    Given iid  observations \( X_1, \dots, X_n \) from a distribution with cumulative distribution function \( F(x) \) and a number  $m \in (0,n]$, there is a randomized algorithm to replace at most \( C m \) observations on average so that
    \begin{equation}\label{Dn}
        D_n = \sup_x \abs{F_n(x) - F(x)} \leq \frac{2}{m},
    \end{equation}
    where $C > 0$ is an absolute constant.
\end{theorem}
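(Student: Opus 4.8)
The plan is to pass to the uniform distribution and then recast the problem as a deletion problem about the empirical counting function. By the probability integral transform I may assume (after treating the continuity of $F$ as a routine reduction) that the sample is $U_1,\dots,U_n$ iid uniform on $[0,1]$, since $U_i=F(X_i)$ preserves the Kolmogorov distance and any relocation of an $X_i$ corresponds to an arbitrary relocation of $U_i$ in $[0,1]$. Writing $N(t)=\#\{i:U_i\le t\}$ and $g(t)=N(t)-nt$, the target \eqref{Dn} is exactly $\sup_t\abs{g(t)}\le k$ with $k=2n/m$. The key structural observation is that an algorithm which keeps a subset $S$ of the points in place and is free to place the remaining $n-\abs{S}$ points anywhere has a clean feasibility criterion: such a placement achieving $\sup_t\abs{g(t)}\le k$ exists \emph{if and only if} the retained points never overcrowd an interval, i.e. for all $s\le t$,
\[
\#\{i\in S: U_i\in(s,t]\}\le n(t-s)+2k ,
\]
with the prefix intervals $(0,t]$ counted against $k$ rather than $2k$.

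I would prove this criterion by an explicit water‑filling argument. Given the retained counting function $a(t)=\#\{i\in S:U_i\le t\}$, the free points contribute a nondecreasing step function $R$ with $R(1)=n-\abs{S}$, and the two constraints $nt-k\le a(t)+R(t)\le nt+k$ amount to threading a nondecreasing $R$ between the lower envelope $L(t)=(nt-k-a(t))^+$ and the upper envelope $nt+k-a(t)$. The minimal admissible choice $R(t)=\sup_{s\le t}L(s)$ exists and stays below the upper envelope precisely under the displayed non‑overcrowding condition; the total mass it needs is at most $n-\abs{S}$, which can then be padded out. Consequently the minimum number of points the algorithm must move equals the minimum number of deletions that remove every overcrowded interval.

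The next step is duality. The deletion problem has an interval constraint matrix, so it is totally unimodular, and its optimum equals the packing optimum
\[
\text{(moves)}=\max\Big\{\textstyle\sum_j \big(g(t_j)-g(s_j)-2k\big)^+ : (s_j,t_j]\ \text{pairwise disjoint}\Big\}.
\]
Using $(x)^+=\int_0^\infty \mathbf 1_{\{x>\ell\}}\,d\ell$ and disjointness, this is bounded by $\int_{2k}^\infty M_\ell\,d\ell$, where $M_\ell$ is the maximal number of pairwise disjoint intervals on each of which $g$ makes a net ascent exceeding $\ell$ — that is, the number of level‑$\ell$ up‑crossings of the empirical deviation process. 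Everything is now reduced to estimating $\E[M_\ell]$.

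The main obstacle is this last probabilistic bound: I expect $\E[M_\ell]\le Cn/\ell^2$ uniformly for $\ell\ge 2k$, reflecting that $g$ behaves like $\sqrt n$ times a Brownian bridge, so a net ascent of size $\ell$ consumes a $t$‑window of length $\sim \ell^2/n$ and at most $\sim n/\ell^2$ such windows fit into $[0,1]$. Rigorously I would introduce successive stopping times marking fresh $\ell$‑ascents and control each increment by the binomial/optional‑stopping structure of $N(t)-nt$ (or by the Brownian‑bridge embedding), the delicate point being uniformity in $\ell$ down to the critical scale $\ell=2k$. Granting this estimate, integration gives
\[
\E[\text{moves}]\le \int_{2k}^{\infty}\frac{Cn}{\ell^2}\,d\ell=\frac{Cn}{2k}=\frac{Cm}{4},
\]
which is the desired $O(m)$ bound, while the constructed placement guarantees $D_n\le k/n=2/m$ deterministically. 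I would close by noting that the deletion set realizing the packing optimum is computable greedily, so the procedure is a genuine (randomized, through the random sample) algorithm, and by checking the discrete divisibility and non‑continuous‑$F$ edge cases, which only cost harmless constants.
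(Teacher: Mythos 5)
Your route is genuinely different from the paper's: the paper runs a recursive dyadic rebalancing scheme (split an interval only if its local discrepancy exceeds $1/m$, equalize the two halves, recurse) and applies the DKW inequality at every scale, whereas you aim for an exact characterization of the minimal number of moves via a water-filling feasibility criterion, interval LP duality, and a level-crossing count. That would in principle be a stronger result. But the proposal has a genuine gap exactly where you flag it: the estimate $\E[M_\ell] \le Cn/\ell^2$, uniformly over $\ell \ge 2k$, is never proved --- and it is the entire probabilistic content of the argument. Everything preceding it (feasibility, total unimodularity, the bound $\sum_j(\cdot)^+ \le \int_{2k}^\infty M_\ell\,d\ell$) is deterministic bookkeeping, so granting the estimate is essentially granting the theorem. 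Proving it requires (i) a stopping-time/renewal decomposition of the greedy $\ell$-ascent intervals in which the conditional law of the unscanned points changes at every epoch: the residual intensity $(n-N(\tau))/(1-\tau)$ fluctuates and can substantially exceed $n$ near $t=1$ (the bridge effect), so one cannot simply restart a fresh uniform empirical process; and (ii) a two-regime tail bound (Gaussian/Bernstein when $\ell$ is small relative to the expected point count of the window, Poisson-cluster when it is large) that remains valid down to the critical scale $\ell = 2k$, which is $O(1)$ when $m \asymp n$. None of this is routine, and the Brownian-bridge heuristic does not substitute for it. The paper sidesteps this difficulty entirely: within each dyadic interval the surviving points are exactly an iid uniform sample, so plain DKW gives the needed tail bound scale by scale, and the scales sum geometrically.

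There are also secondary, fixable errors. The reduction $U_i = F(X_i)$ requires $F$ continuous; for general $F$ one must go the other way, $X_i = F^{-1}(U_i)$ with the generalized inverse, as the paper does. Your feasibility criterion is incomplete: besides prefixes $(0,t]$ counted against $k$, the suffixes $(s,1]$ must also be counted against $k$, because the free points have prescribed total mass, i.e.\ $R(1) = n - |S|$ is forced; one can construct retained sets satisfying all your displayed constraints for which the minimal admissible $R$ needs mass exceeding $n-|S|$, so the ``padded out'' step fails as stated. Finally, the packing duality (min deletions equals max disjoint excess) is asserted via total unimodularity, but the LP dual of the covering problem is not literally a disjoint-interval packing; an uncrossing argument is needed. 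Each of these can be repaired, but the missing ascent-count estimate is not a detail: as written, the proof is incomplete at its core.
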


More precisely, Theorem~\ref{1sample} states that there is a randomized algorithm (Algorithm~1 below) that produces modified data \(\tilde{X}_1, \dots, \tilde{X}_n\) such that
\[
\E \left[\sum_{i=1}^n \mathbf{1}_{\{X_i \neq \tilde{X}_i\}}\right] \le Cm 
\quad \text{and} \quad 
D_n(\tilde{X}_1, \dots, \tilde{X}_n) \leq \frac{2}{m} 
\quad \text{deterministically},
\]
with the expectation taken over both the sample and the algorithm. The algorithm has time complexity of \(O\bigl(n\ln(n)\ln(m)\bigr)\), see \S\,\ref{complexity}.
\smallskip

Theorem~\ref{1sample} is sharp in general, as the following matching lower bound shows:

\begin{proposition}[Lower bound]\label{lower_bound}
    Let \( X_1, \dots, X_n \) be iid observations from the uniform distribution on $[0,1]$ and let $m>0$ be a number such that $m^2/n>1$ is an integer. Then achieving a discrepancy of \( c/m \) requires replacing at least $C m$ observations on average. Here $c$ is an absolute constant. 
\end{proposition}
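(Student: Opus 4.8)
The plan is to exhibit a scale at which the original sample is irreducibly irregular, and to show that forcing small discrepancy at that scale entails changing many bin counts. Put $k = m^2/n$; by hypothesis this is an integer $\ge 2$, and in the range $m \le n$ relevant to Theorem~\ref{1sample} it satisfies $k \le n$. Partition $[0,1]$ into the $k$ equal intervals $I_j = ((j-1)/k, j/k]$ and let $M_j = \#\{i : X_i \in I_j\}$ and $\tilde N_j = \#\{i : \tilde X_i \in I_j\}$ be the bin counts of the original and the modified samples. Because the $X_i$ are uniform, each $M_j$ is distributed as $\mathrm{Bin}(n, 1/k)$ with mean $\mu := n/k = n^2/m^2 \ge 1$ and variance $\mu(1-1/k) \ge \mu/2$.

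First I would record two deterministic facts, valid for every realization of the sample and of the algorithm for which the modified discrepancy is at most $c/m$. Evaluating the modified empirical CDF at the grid points $j/k$ and using $\sup_x|\tilde F_n - F| \le c/m$ gives $\bigl|\sum_{l\le j}\tilde N_l - j\mu\bigr| \le cn/m$ for all $j$; differencing two consecutive such bounds yields $|\tilde N_j - \mu| \le 2cn/m$ for every $j$. Separately, an observation with $X_i = \tilde X_i$ contributes equally to $M_j$ and to $\tilde N_j$ for the interval containing it, so only the $R := \#\{i : X_i \ne \tilde X_i\}$ relocated points can make the two count vectors differ; hence $\sum_j|\tilde N_j - M_j| \le 2R$. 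Combining these with the triangle inequality,
\[
R \ge \tfrac12\sum_{j=1}^k |\tilde N_j - M_j| \ge \tfrac12\sum_{j=1}^k|M_j - \mu| - \tfrac12\sum_{j=1}^k|\tilde N_j - \mu| \ge \tfrac12\sum_{j=1}^k|M_j - \mu| - cm,
\]
using $\tfrac12\,k\,(2cn/m) = ckn/m = cm$.

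It remains to bound $\E\sum_j|M_j - \mu|$ from below and to take expectations. The crux is an anti-concentration estimate for the binomial: since $\mathrm{Var}(M_j) \ge \mu/2 \ge 1/2$, the mean absolute deviation is comparable to the standard deviation, so $\E|M_j - \mu| \ge c'\sqrt\mu$ for an absolute constant $c' > 0$. Summing over the $k$ bins and using $\mu = n/k$ gives $\E\sum_j|M_j - \mu| \ge c'k\sqrt{n/k} = c'\sqrt{nk} = c'm$. Taking expectations in the displayed inequality (the integrand there depends only on the sample) then yields $\E R \ge (\tfrac{c'}2 - c)m$, so choosing the absolute constant $c < c'/2$ in the statement gives $\E R \ge Cm$ with $C = \tfrac{c'}2 - c > 0$.

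I expect the anti-concentration step to be the only real obstacle: the rest is deterministic bookkeeping, but here one must bound the $\ell^1$ fluctuation of the counts from below, which a variance bound alone does not supply. I would establish $\E|M_j - \mu| \gtrsim \sqrt\mu$ uniformly over $\mu \ge 1$ either from De Moivre's closed form for the binomial mean absolute deviation, or, for $\mu$ bounded, by directly verifying $\mathbb{P}\bigl(|M_j - \mu| \ge \tfrac12\sqrt\mu\bigr) \ge c_0$ through a local limit theorem and passing to the expectation.
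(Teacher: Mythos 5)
Your proof is correct and is essentially the paper's argument with different bookkeeping: both partition $[0,1]$ into $k=m^2/n$ equal bins, both show deterministically that discrepancy $\le c/m$ forces every modified bin count to lie within $O(c\sqrt{n/k})$ of the mean $\mu=n/k$, and both rest on binomial anti-concentration at the $\sqrt{\mu}$ scale --- the paper via a tail lower bound ($\mathbb{P}\{\nu_j > \mu+\sqrt{\mu}\}>1/100$, so on average $\gtrsim k$ ``dense'' bins must each shed $0.5\sqrt{\mu}$ points), you via the mean-absolute-deviation bound $\E|M_j-\mu|\gtrsim\sqrt{\mu}$ combined with the $\ell^1$ comparison $\sum_j|\tilde N_j-M_j|\le 2R$, and these two probabilistic inputs are interchangeable consequences of the same local-CLT fact that both texts cite rather than prove in full. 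The one loose end is your restriction to $m\le n$ ``by fiat'': the proposition allows any $m$ with $m^2/n>1$, and the clean fix is the paper's opening observation that no $n$-point sample has discrepancy below $1/(2n)$, so for $m>2cn$ the set $S_{c/m}$ is empty and the bound is vacuous, while for $m\le 2cn$ you get $\mu=n^2/m^2\ge 1/(4c^2)\ge 1$ (in fact $\ge 100$ for small $c$), which is all your MAD estimate needs.
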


More precisely, Proposition~\ref{lower_bound} says that 
\[
\mathbb{E}\,\inf_{(\tilde{X}_1,\dots,\tilde{X}_n)\, 
\in\, S_{c/m}}
\Biggl[\sum_{i=1}^n \mathbf{1}_{\{X_i \neq \tilde{X}_i\}}\Biggr] \;\geq\; C m,
\]
where $S_{c/m}$ is the set of sequences with discrepancy at most \(c/m\):  
\[
S_{c/m} =
\Bigl\{
\tilde{X}_1,\dots,\tilde{X}_n :
D_n(\tilde{X}_1,\dots,\tilde{X}_n) \le c/m
\Bigr\}.
\]
The expectation is taken over the sample \(X_1, \dots, X_n\).
\smallskip%

The constraint $m^2/n>1$ in Proposition~\ref{lower_bound} is explained by the fact that the discrepancy is always $O(1/\sqrt{n})$ with high probability by the DKW inequality, even without replacing any points.

\begin{remark}[Knowledge of the distribution]
The algorithm in Theorem~\ref{1sample} needs to know the underlying distribution. This is unavoidable if $m \gg \sqrt{n}$ due to the basic statistical fact that the cumulative distribution function cannot be estimated from an iid sample of size $n$ with error smaller than $O(1/\sqrt{n})$.
\end{remark}

\begin{remark}[Removing instead of replacing]
    This work is inspired by the recent result of D.~Bilyk and S.~Steinerberger \cite{B-B}, which achieves a discrepancy of \( O(\ln(n)/m) \) by \emph{removing} at most \( m \) points from the original sample. As shown in \cite{B-B}, this result is optimal in terms of the number of removed points. Surprisingly, Theorem~\ref{1sample} shows how that the logarithmic overhead is unnecessary if we allow to replace rather than remove the points. This shows that the sample can be regularized more efficiently than previously known (see, e.g., \cite{DFGGR, B-B}). In discrepancy theory, logarithmic factors often emerge naturally from chaining arguments and union bounds, so avoiding them is unexpected. 
\end{remark}

\begin{remark}[Other work]
If we are allowed to choose {\em all} points $X_1,\ldots,X_n$, then our problem becomes straightforward: placing the points at the quantiles of the distribution achieves discrepancy $1/n$. This is much less trivial in higher dimensions (which this work does not touch), and there is a wealth of work aiming at finding configurations of points that minimize discrepancies, such as 
star discrepancy and Wasserstein distance 
\cite{Mat, Chaz, Beck-Chen, B-B, 
DFGGR, Aist-Dick, Aist-B-Nik, March, Quatt}.
\smallskip%

High-dimensional versions of Theorem~\ref{1sample} have been proved by Dwivedi, Feldheim, Gurel-Gurevich and Ramdas \cite{DFGGR}, but up to polylogarithmic factors in the discrepancy bound.
\smallskip%

More broadly, our work falls into the class of problems that aim to improve a sample through small interventions, such as removing, adding, or replacing a small fraction of data points, or by reweighting the sample \cite{CDP, BMMS, DFGGR, boedihardjo2022private}.
\end{remark}

\section{Proof of Theorem \ref{1sample}}
We begin by reducing the problem to the case where \(X_1,\ldots,X_n\) follow the uniform distribution on \([0,1]\). Next, we apply a recursive, multiscale algorithm that partitions \([0,1]\) into dyadic subintervals. In each subinterval \(I\), the algorithm checks whether the local discrepancy exceeds \(1/m\). If so, it selectively rebalances the sample by moving points between adjacent subintervals to reduce this discrepancy. To control the discrepancy after these adjustments, the proof relies on DKW inequality \cite{D-K-W}:
\begin{equation} \label{eq: DKW}
    \Prob*{D_n > \dfrac{t}{\sqrt{n}}} 
    \leq 2e^{-2t^2},
    \quad t \ge 0,
\end{equation}
where the optimal constant \(2\) was established by Massart \cite{Massart}. Finally, the algorithm must balance the trade-off between moving only \(O(m)\) points on average and reducing the overall discrepancy to \(O(1/m)\); this is achieved via further probabilistic arguments.
\smallskip%

To begin with, let us show that it suffices to consider the case where \( X_1, \dots, X_n \) follow the uniform distribution on \( [0,1] \). This step is standard:

\begin{lemma}
If Theorem \ref{1sample} holds for the uniform distribution \( \mathrm{Unif}[0,1] \), then it holds for all distributions.
\end{lemma}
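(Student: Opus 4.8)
The plan is to move from $\mathrm{Unif}[0,1]$ to an arbitrary $F$ by the probability integral transform, using the generalized inverse $F^{-1}(u) = \inf\{x : F(x) \ge u\}$. Everything hinges on the elementary identity of events $\{F^{-1}(u) \le x\} = \{u \le F(x)\}$, valid for every cumulative distribution function $F$; this is what lets both the discrepancy and the number of replaced points pass cleanly between the two coordinate systems.

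First I would couple the given sample to a uniform one. From $X_1, \dots, X_n \sim F$ I produce iid variables $U_1, \dots, U_n \sim \mathrm{Unif}[0,1]$ satisfying $F^{-1}(U_i) = X_i$ almost surely. When $F$ is continuous this is simply $U_i = F(X_i)$; in general one uses the distributional transform $U_i = F(X_i^-) + V_i\bigl(F(X_i) - F(X_i^-)\bigr)$ with auxiliary iid uniforms $V_i$, which smooths out the atoms of $F$ and still yields $F^{-1}(U_i) = X_i$ almost surely. I then apply the assumed uniform-case algorithm to $U_1, \dots, U_n$, obtaining $\tilde U_1, \dots, \tilde U_n$ with at most $Cm$ replacements on average and uniform discrepancy $\sup_u \abs{\tilde G_n(u) - u} \le 2/m$, where $\tilde G_n$ denotes the empirical distribution function of the $\tilde U_i$. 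Finally I pull the output back by setting $\tilde X_i = F^{-1}(\tilde U_i)$.

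It remains to verify the two conclusions. For the replacement count, observe that $U_i = \tilde U_i$ forces $\tilde X_i = F^{-1}(U_i) = X_i$, so $\{i : X_i \ne \tilde X_i\} \subseteq \{i : U_i \ne \tilde U_i\}$ and hence $\E \sum_i \mathbf{1}_{\{X_i \ne \tilde X_i\}} \le \E \sum_i \mathbf{1}_{\{U_i \ne \tilde U_i\}} \le Cm$. For the discrepancy, the event identity gives $\mathbf{1}_{\{F^{-1}(\tilde U_i) \le x\}} = \mathbf{1}_{\{\tilde U_i \le F(x)\}}$, so the empirical distribution function of the $\tilde X_i$ equals $\tilde G_n(F(x))$; taking the supremum over $x$ is the same as taking it over $v = F(x)$ ranging in the range of $F$, which is contained in $[0,1]$, whence $\sup_x \abs{\tilde F_n(x) - F(x)} \le \sup_u \abs{\tilde G_n(u) - u} \le 2/m$.

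The one genuinely delicate point, and the place I would be careful, is the discontinuous case: when $F$ has atoms the naive choice $U_i = F(X_i)$ is no longer uniform, and it is precisely the distributional transform together with the generalized-inverse identity that repairs the argument without any regularity assumption on $F$.
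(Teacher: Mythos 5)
Your proposal is correct and takes essentially the same route as the paper: the generalized inverse $F^{-1}(u)=\inf\{x: F(x)\ge u\}$ together with the event identity $\{F^{-1}(u)\le x\}=\{u\le F(x)\}$, used to transfer both the discrepancy and the replacement count between the uniform and general samples. The only difference is cosmetic — the paper couples by assuming without loss of generality that $X_i = F^{-1}(U_i)$ for iid uniform $U_i$ (justified since the theorem's guarantees depend only on the distribution of the sample), whereas you reconstruct the $U_i$ from the given $X_i$ via the distributional transform, which makes the handling of atoms explicit rather than absorbing it into the WLOG.
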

\begin{proof}
Let \( X \) be a random variable with cumulative distribution function \( F(x) \), which may not be continuous or strictly increasing. Consider the generalized inverse of \( F \):
\[
F^{-1}(u) = \inf\left\{x\,:\, F(x) \geq u \right\}.
\]
For all \( x \in \mathbb{R} \) and \( u \in [0,1] \), we have
\begin{equation}\label{gen_F}
\{ F^{-1}(u) \leq x \} \quad \text{iff} \quad \{ u \leq F(x) \}.
\end{equation}
If $X$ is $F$-distributed, then 
so is \( F^{-1}(U) \) for a uniform random variable $U$, as follows from \eqref{gen_F}:
\[
\Prob{F^{-1}(U) \leq x} 
= \Prob{U \leq F(x)} 
= F(x).
\]
Thus, we may assume without loss of generality that \( X_i = F^{-1}(U_i) \), where \( U_i \) are independent uniform random variables. By \eqref{gen_F}, for each \( x \),
\[
\#\{X_i \leq x \} = \#\{U_i \leq F(x) \}.
\]
Setting \( u = F(x) \), we find:
\[
\abs*{\frac{\#\{X_i \leq x \}}{n} - F(x)} 
= \abs*{\frac{\#\{U_i \leq u \}}{n} - u},
\]
and the lemma follows. \qed
\end{proof}
\smallskip%

Throughout, let us assume that the sample \( X_1, \dots, X_n \) is drawn from the uniform distribution \( \mathrm{Unif}[0,1] \). For convenience, let us also assume that $n$ is odd. 
If $n$ is even, one may simply discard one observation and reintroduce it later; since a single point contributes negligibly to the discrepancy, this does not affect the overall result.
\smallskip%

Our goal is to modify the sample by replacing, on average, only \(O(m)\) observations so that the new sample has discrepancy \(O(1/m)\).
\smallskip

Introduce a uniform grid:
\[
Y_k = \frac{k}{n},\quad k = 0,1,\dots,n-1.
\]
Let us use the following measure of discrepancy. For each (closed) interval \( I \subset [0,1] \), define:
\[
d_Y(I) 
= \sup_J \abs*{\frac{\#\{X_i \in J \}}{n} - \frac{\#\{Y_k \in J \}}{n}},
\]
where the supremum is taken over all intervals \( J \subset I \) sharing the left endpoint with \( I \). Note that for each \( J \), 
\[
\abs*{\frac{\#\{X_i \in J \}}{n} - \abs{J}} 
\leq \abs*{\frac{\#\{X_i \in J \}}{n} - \frac{\#\{Y_k \in J \}}{n}} + \frac{1}{n},
\]
where \( \abs{J} \) denotes the length of \( J \).
\smallskip%

We shall say that an interval $I$ is 
\emph{acceptable} if 
\[
d_Y(I) \leq \dfrac{1}{m}.
\]
We shall say that an interval $I$ is 
\emph{equalized} if 
\[
\#\{X_i \in I \} = \#\{Y_i \in I \}.
\]
Our goal is to arrange that \( [0,1] \) 
is acceptable so that
\[
D_n \leq d_{Y}([0,1]) + \frac{1}{n} \leq 
\frac{1}{m} + \frac{1}{n} \leq \frac{2}{m},
\]
while keeping the number of replaced points 
\(O(m)\) on average. To do so, let us arrange the replacements as follows. 
\smallskip

\text{Algorithm 1:} 
\begin{itemize}
\item Start with \(I=[0,1]\).
\smallskip

\item If the current interval \(I\) is \emph{not acceptable}, divide it into two equal halves, \(I_1\) and \(I_2\). Then randomly and independently select the appropriate number of sample points in one half and relocate them to random positions in the other half (chosen independently and uniformly) so that the two halves become equalized.
\smallskip

\item Repeat the previous step recursively for both \( I_1 \) and \( I_2 \).
\end{itemize}
\smallskip%

Since we start with \(I = [0,1]\), all interval endpoints generated by the recursion are dyadic 
(of the form \(2^{-j}\)). Moreover, if \(n\) is odd, no grid point \(Y_k\) belongs simultaneously to both halves \(I_1\) and \(I_2\), and almost surely no sample point \(X_i\) coincides with a dyadic endpoint.  
\smallskip

\begin{remark}[The output of Algorithm~1, formally]
Given \(X_1, \dots, X_n\) and \(m\), Algorithm~1 outputs the modified sample \(\tilde{X}_1, \dots, \tilde{X}_n\), the move count
\[
M_1 = \sum_{i=1}^n \mathbf{1}_{\{X_i \neq \tilde{X}_i\}},
\]
and the annotated dyadic recursion tree \(T_1\).  
Each node of \(T_1\) corresponds to a dyadic interval processed by the algorithm.  
For every such interval \(I\), the algorithm stores a snapshot taken when \(I\) is processed.  
This snapshot records the number and positions of the points in \(I\) at that moment (before equalization of \(I_1\) and \(I_2\)), the local discrepancy \(d_Y(I)\), and whether \(I\) is acceptable. If \(I\) is not acceptable, the snapshot also flags the points moved during equalization.  
All snapshots are immutable.
\end{remark}
\smallskip%

Algorithm~1 terminates after finitely many steps.  
Indeed, as the recursion proceeds, it reaches intervals of length less than \(1/n\), which are automatically acceptable.
\smallskip%

Let us now verify that the final outcome makes \([0,1]\) acceptable.
\begin{lemma}
At the end of Algorithm 1, the interval \([0,1]\) is acceptable.
\end{lemma}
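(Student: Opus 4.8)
The plan is to prove a statement slightly stronger than the lemma, namely that \emph{every} dyadic interval $I$ processed by Algorithm~1 is acceptable at termination, and then to specialize to the root $I=[0,1]$. I would argue by structural induction on the recursion tree $T_1$, working upward from the leaves. The engine of the argument is a persistence property of equalization: once an interval $I$ is split into halves $I_1,I_2$ and these are equalized, every later move of the algorithm relocates points only within $I_1$, only within $I_2$, or within an interval disjoint from $I$, because the recursion on $I_1$ and on $I_2$ never transfers points across the midpoint. Consequently the counts $\#\{X_i\in I_1\}$ and $\#\{X_i\in I_2\}$ are frozen once they are set, so $I_1$ and $I_2$ remain equalized all the way to termination. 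I would record this as a preliminary observation, together with the companion fact that each interval is already equalized at the instant it is first processed: the root has $\#\{X_i\in[0,1]\}=\#\{Y_k\in[0,1]\}=n$, and every other processed interval is one of the two freshly equalized halves of its parent.

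The heart of the matter is the inductive step. Let $I=[a,c]$ be an internal node with midpoint $\mu$, so $I_1=[a,\mu]$ and $I_2=[\mu,c]$; by the inductive hypothesis both are acceptable at termination, $d_Y(I_1)\le 1/m$ and $d_Y(I_2)\le 1/m$. To bound $d_Y(I)$ I would take an arbitrary $J=[a,b]\subset I$ sharing the left endpoint with $I$ and split on whether $b\le\mu$. If $b\le\mu$ then $J\subset I_1$ shares its left endpoint with $I_1$, so its contribution is at most $d_Y(I_1)\le 1/m$. If $b>\mu$, I would write $J=I_1\sqcup J'$ with $J'=(\mu,b]$ and telescope:
\[
\frac{\#\{X_i\in J\}}{n}-\frac{\#\{Y_k\in J\}}{n}
=\Bigl(\tfrac{\#\{X_i\in I_1\}}{n}-\tfrac{\#\{Y_k\in I_1\}}{n}\Bigr)
+\Bigl(\tfrac{\#\{X_i\in J'\}}{n}-\tfrac{\#\{Y_k\in J'\}}{n}\Bigr).
\]
The first bracket vanishes because $I_1$ is equalized at termination, and the second is at most $d_Y(I_2)\le 1/m$ in absolute value since $J'\subset I_2$ shares the left endpoint $\mu$ with $I_2$. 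Taking the supremum over $J$ gives $d_Y(I)\le 1/m$, i.e.\ $I$ is acceptable. The clean splitting of the counts here relies on the two genericity facts already noted in the text: for odd $n$ no grid point sits at $\mu$, and almost surely no $X_i$ sits at the dyadic point $\mu$, so $\#\{\,\cdot\in J\}=\#\{\,\cdot\in I_1\}+\#\{\,\cdot\in J'\}$ with no double counting (and $J'=(\mu,b]$ may be replaced by $[\mu,b]$ freely).

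For the base case I would handle the two kinds of leaves. If a leaf $I$ was declared acceptable when processed, it stays acceptable because no later move touches its points. If instead $I$ is a terminal short interval of length $<1/n$, then since $I$ is equalized when processed and contains at most one grid point, it contains at most one sample point as well; hence for every $J\subset I$ the two normalized counts lie in $\{0,1/n\}$, giving $d_Y(I)\le 1/n\le 1/m$ because $m\le n$. Applying the induction to the root yields the lemma. I expect the only genuinely delicate point to be the bookkeeping behind the persistence of equalization---confirming that $\#\{X_i\in I_1\}$ is never disturbed by any move the algorithm makes after $I$ is split---since the telescoping identity and the base case are then routine.
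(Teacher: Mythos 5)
Your proof is correct. It rests on the same two ingredients as the paper's: an equalized interval contributes zero to the count difference, while an acceptable interval contributes at most $1/m$, together with the genericity facts (odd $n$, no sample or moved points at dyadic boundaries) and the observation that equalization and acceptability, once established, persist to termination because all later moves stay inside a single half. The difference is organizational. The paper argues in one shot: at termination $[0,1]$ is partitioned into equalized, acceptable leaves, and for any $x$ one splits $[0,x]$ at the left endpoint $y$ of the leaf containing $x$ --- the prefix $[0,y]$ is a back-to-back union of equalized leaves, hence equalized, and $[y,x]$ lies inside an acceptable leaf. You instead run a structural induction up the recursion tree, splitting each query interval at the midpoint of the current node; unrolled, your induction reproduces exactly that decomposition. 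What your version buys is a slightly stronger conclusion (every processed interval, not just the root, is acceptable at termination) and an explicit formulation of the persistence-of-equalization bookkeeping, which the paper leaves implicit in the assertion that the algorithm ``generates a partition of $[0,1]$ into equalized, acceptable intervals.'' One small redundancy: your second kind of leaf (terminal intervals of length $<1/n$) never actually arises in Algorithm~1, since the recursion stops only at intervals declared acceptable, and short equalized intervals are leaves precisely because they are automatically acceptable; so that base case is subsumed by the first.
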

\begin{proof}
When the algorithm stops, it generates a partition of $[0,1]$ into equalized, acceptable intervals. For any \(x\in[0,1]\), let \(I\) be the unique interval (generated during the algorithm) that contains \(x\), and let \(y\) denote the left endpoint of \(I\). Recall that we assume \(n\) to be 
odd. Hence, no grid point \(Y_i\) lies in two dyadic 
intervals of the same length. Moreover, almost surely, none of the sample points \(X_i\), nor any points moved during the algorithm, lie exactly at dyadic boundary points \(2^{-j}\).
\smallskip

We calculate:
\[
\left| 
\frac{\#\{X_i \leq x \}}{n} - 
\frac{\#\{Y_i \leq x \}}{n} 
\right| \leq 
\left| 
\frac{\#\{X_i \leq y \}}{n} - 
\frac{\#\{Y_i \leq y \}}{n} \right| + 
\left| 
\frac{\#\{X_i \in [y,x] \}}{n} - 
\frac{\#\{Y_i \in [y,x] \}}{n} \right|
\]
Since \([0,y]\) is a disjoint union of back-to-back equalized intervals, it itself is equalized, so the first term on the right hand side is zero. Since \(I\) is acceptable, the second term is at most \(1/m\). Consequently,
\[
\left| 
\frac{\#\{X_i \leq x \}}{n} - 
\frac{\#\{Y_i \leq x \}}{n} \right| \leq 0 + \frac{1}{m} \leq 
\frac{1}{m},
\]
and the lemma follows. \qed
\end{proof}
\smallskip%

Let \(M_1\) denote the total number of points moved by Algorithm 1. We shall show that \(M_1\) is \(O(m)\) on average. To bound the expected total number of replaced points, we compare Algorithm 1 to the following alternative algorithm:  
\smallskip%

\text{Algorithm 2:} 
\begin{itemize}
\item Start with \(I=[0,1]\).
\smallskip

\item Divide $I$ into two equal halves, \(I_1\) and \(I_2\). Then randomly and independently select the appropriate number of sample points in one half and relocate them to random positions in the other half (chosen independently and uniformly) so that the two halves become equalized.
\smallskip

\item Repeat the previous step recursively for both \( I_1 \) and \( I_2 \).
\end{itemize}
\smallskip%

Algorithm~2 is identical to Algorithm~1 except that it divides every interval encountered, regardless of acceptability. Algorithm~2 never terminates.

\begin{remark}[The output of Algorithm~2, formally]
Given \(X_1, \dots, X_n\) and \(m\), Algorithm~2 produces the annotated dyadic recursion tree \(T_2\). Its nodes correspond to the dyadic intervals processed by the algorithm. For each interval \(I\), the algorithm records a snapshot at the time \(I\) is processed.  
This snapshot contains the number and positions of points in \(I\) (before equalization of \(I_1\) and \(I_2\)), the local discrepancy \(d_Y(I)\), and whether \(I\) is acceptable. If \(I\) is not acceptable, the snapshot also flags the points moved during equalization. All snapshots are immutable.
\smallskip%

Both recursion trees \(T_1\) and \(T_2\) are indexed by the dyadic intervals of \([0,1]\). The tree \(T_1\) is a subtree of \(T_2\), and the two algorithms produce identical snapshots on their common nodes.
\smallskip%

Statements of Lemma~\ref{tail_lemma} and Lemma~\ref{m(I)} referring to an interval \(I\) (and to quantities such as \(d_Y(I)\)) are to be understood with respect to the snapshot associated with the node \(I \in T_2\).  
Both lemmas rely on the following key observation: when Algorithm~2 processes an interval \(I\), the points \(X_i\) falling in \(I\), even if have been moved during earlier steps, remain independent and uniformly distributed within \(I\). 
\end{remark}
\smallskip%

Although Algorithm~2 never stops dividing, it processes only finitely many unacceptable intervals. Indeed, once intervals become shorter than \(1/n\), they are acceptable.
\smallskip%

Let \(M_2\) denote the total number of points moved by Algorithm~2 while processing unacceptable intervals.  
\begin{lemma}\label{M1M2}
Let $M_1$ denote the total number of points moved by Algorithm 1. Let \(M_2\) denote the total number of points moved by Algorithm 2 in those intervals that are unacceptable.
If both algorithms are applied to the same input sample using the same random choices, then
\[
M_1 \le M_2.
\]
\end{lemma}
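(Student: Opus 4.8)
The plan is to write each move count as a sum over the unacceptable nodes of the respective recursion tree and then compare the two sums term by term. For any unacceptable dyadic interval $I$, let $\mu(I)$ denote the number of points relocated when $I$ is split into $I_1,I_2$ and the two halves are equalized; since equalization moves exactly $\lvert \#\{X_i \in I_1\} - \#\{Y_k \in I_1\}\rvert$ points, $\mu(I)$ is a deterministic function of the snapshot stored at the node $I$. First I would record the two bookkeeping identities
\[
M_1 = \sum_{I \in U_1} \mu(I), \qquad M_2 = \sum_{I \in U_2} \mu(I),
\]
where $U_1$ (resp.\ $U_2$) is the set of unacceptable intervals appearing in $T_1$ (resp.\ $T_2$). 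The identity for $M_1$ holds because Algorithm~1 relocates points precisely at the intervals it finds unacceptable, and these are exactly the internal nodes of $T_1$ (a leaf of $T_1$ is acceptable, so no move occurs there); the identity for $M_2$ is simply the definition of $M_2$.

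Next I would invoke the structural facts already recorded for the two trees: $T_1$ is a subtree of $T_2$, and on their common nodes the two algorithms store identical snapshots. Because both $\mu(I)$ and the predicate ``$I$ is acceptable'' depend only on the snapshot at $I$, equality of snapshots on $T_1\subseteq T_2$ yields two consequences at once. First, an interval $I \in T_1$ is unacceptable with respect to $T_1$ if and only if it is unacceptable with respect to $T_2$; since every dyadic interval occurs as a node of $T_2$, this gives the inclusion $U_1 \subseteq U_2$. Second, for every $I \in U_1$ the value $\mu(I)$ computed from the $T_1$-snapshot equals the value computed from the $T_2$-snapshot, so the two displayed sums use the same summand on the shared index set $U_1$. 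Finally, since $\mu(I) \ge 0$ for every $I$, discarding the extra nonnegative terms indexed by $U_2 \setminus U_1$ gives
\[
M_1 = \sum_{I \in U_1} \mu(I) \;\le\; \sum_{I \in U_2} \mu(I) = M_2,
\]
which is the assertion.

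I expect the only point genuinely requiring care to be the justification that the snapshots agree on the common nodes. This rests on the observation that every strict ancestor of a node of $T_1$ is itself an internal node of $T_1$, hence an unacceptable interval that is split and equalized, and under the shared randomness both algorithms perform this equalization identically; consequently the point configuration inside any common node coincides at the moment that node is processed, and so do the recorded snapshot, the acceptability flag, and the count $\mu(I)$. Since this is exactly the content of the structural remark preceding the lemma, everything beyond it reduces to the elementary term-by-term comparison above.
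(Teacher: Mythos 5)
Your proof is correct and follows essentially the same route as the paper's: both rest on the coupling under shared randomness (the subtree relation $T_1 \subseteq T_2$ with identical snapshots on common nodes), so that Algorithm~1's moves are exactly Algorithm~2's moves on the shared unacceptable intervals, and the remaining terms of $M_2$ are nonnegative. Your write-up merely formalizes, via the sums over $U_1 \subseteq U_2$, what the paper states in two sentences.
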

\begin{proof}
Up until an acceptable interval, both algorithms perform the same replacements. However, when an acceptable interval is encountered, Algorithm 1 stops processing that interval while Algorithm 2 continues to subdivide and perform additional relocations. \qed
\end{proof} 
\smallskip%

At every stage in Algorithm 2, the interval \([0,1]\) is partitioned into equalized subintervals. Within each such interval, the points $X_i$ remain independent and uniformly distributed. With this understood, 
we claim:
\begin{lemma}\label{tail_lemma}
Let \( I \) be an interval of length \( |I| = 1/k \) encountered in Algorithm 2, and define:
\[
\delta(I) = d_Y(I) \cdot 
\mathbf{1}_{
\{ I \text{\normalfont{ is unacceptable}} 
\}}.
\]
Then for each $\lambda \geq 0$, 
\begin{equation}\label{tail}
\Prob*{\delta(I) > \frac{\lambda}{\sqrt{n\,k}}} 
\leq C_1 e^{-\lambda^2/2}
\end{equation}
for some absolute constant $C_1 > 0$. In particular, 
\begin{equation}\label{E}
\E \delta(I) 
\leq \frac{C_2}{\sqrt{n\,k}}
\end{equation}
for some absolute constant $C_2 > 0$.
\end{lemma}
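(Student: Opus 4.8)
The plan is to reduce the local discrepancy $d_Y(I)$ to a rescaled Kolmogorov--Smirnov statistic, apply the DKW inequality \eqref{eq: DKW}, and then exploit the indicator $\mathbf{1}_{\{I\text{ unacceptable}\}}$ to promote the resulting tail bound to all $\lambda\ge 0$. First I would record the deterministic scaffolding. Every node $I\in T_2$ is equalized when it is processed, so the count $N_I:=\#\{X_i\in I\}=\#\{Y_k\in I\}$ is a \emph{deterministic} integer with $|N_I-n/k|\le 1$, since the grid has spacing $1/n$ while $|I|=1/k$. By the key observation quoted before the lemma, at the snapshot of $I$ the $N_I$ sample points lying in $I$ are independent and uniform on $I$. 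If $I$ is short enough to be automatically acceptable, then $\delta(I)=0$ and \eqref{tail} is trivial, so I may assume $N_I\ge 1$.

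Next comes the reduction. Writing $I=[a,a+1/k]$ and rescaling by $u\mapsto(u-a)k$, the sample points in $I$ become $N_I$ iid $\mathrm{Unif}[0,1]$ variables with empirical CDF $\widehat F$, while the rescaled grid points form a uniformly spaced set whose empirical CDF $G$ obeys $\sup_s|G(s)-s|\le c_0/N_I$ for an absolute constant $c_0$. Since each competitor $J$ shares the left endpoint of $I$, it has the form $[a,a+s/k]$, so
\[
d_Y(I)=\frac{N_I}{n}\sup_{0\le s\le 1}\bigl|\widehat F(s)-G(s)\bigr|
\le \frac{N_I}{n}\,D_{N_I}+\frac{c_0}{n},
\]
where $D_{N_I}=\sup_s|\widehat F(s)-s|$ is the Kolmogorov--Smirnov statistic of $N_I$ uniform samples and the inequality is the triangle inequality. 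By \eqref{eq: DKW}, $\Prob*{D_{N_I}>t/\sqrt{N_I}}\le 2e^{-2t^2}$; substituting this into the display and using $N_I\approx n/k$ together with $k\le n$ gives, after routine bookkeeping,
\[
\Prob*{d_Y(I)>\frac{\mu}{\sqrt{n\,k}}}\le C_1\,e^{-\mu^2/2}
\qquad\text{for all }\mu\ge 0 .
\]
The loose exponent $\mu^2/2$, compared with DKW's $2\mu^2$, leaves ample slack to absorb the additive term $c_0/n\le c_0/\sqrt{n\,k}$ and the $\pm1$ discrepancy between $N_I$ and $n/k$, and to cover small $\mu$ by enlarging $C_1$.

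Finally I would invoke the indicator. For every $\lambda\ge 0$,
\[
\Bigl\{\delta(I)>\tfrac{\lambda}{\sqrt{n\,k}}\Bigr\}
=\Bigl\{d_Y(I)>\max\bigl(\tfrac1m,\tfrac{\lambda}{\sqrt{n\,k}}\bigr)\Bigr\}
=\Bigl\{d_Y(I)>\tfrac{\mu}{\sqrt{n\,k}}\Bigr\},
\qquad \mu:=\max\bigl(\tfrac{\sqrt{n\,k}}{m},\lambda\bigr)\ge\lambda,
\]
because $\delta(I)>0$ forces $I$ to be unacceptable, i.e.\ $d_Y(I)>1/m$. The previous bound and monotonicity of $e^{-\mu^2/2}$ then give $\Prob*{\delta(I)>\lambda/\sqrt{n\,k}}\le C_1 e^{-\mu^2/2}\le C_1 e^{-\lambda^2/2}$, which is \eqref{tail}; this is precisely where the indicator is indispensable, as it makes the bound valid even at $\lambda=0$. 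The moment estimate \eqref{E} follows by integrating the tail,
\[
\E\,\delta(I)=\frac{1}{\sqrt{n\,k}}\int_0^\infty \Prob*{\delta(I)>\tfrac{\lambda}{\sqrt{n\,k}}}\,d\lambda
\le \frac{C_1}{\sqrt{n\,k}}\int_0^\infty e^{-\lambda^2/2}\,d\lambda=\frac{C_2}{\sqrt{n\,k}} .
\]

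I expect the reduction in the second paragraph to be the main obstacle: one must check that the one-sided supremum defining $d_Y(I)$ is genuinely captured by the two-sided statistic $D_{N_I}$, control the grid-CDF error cleanly, and keep track of the constants carefully enough to land on the stated exponent $e^{-\lambda^2/2}$ uniformly over all scales $k\le n$.
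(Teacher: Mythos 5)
Your proposal is correct and follows essentially the same route as the paper's proof: both dismiss the tiny, automatically acceptable intervals, use equalization to pin the deterministic count $N_I$ within $1$ of $n/k$, apply DKW to the conditionally iid uniform points inside $I$ at the snapshot, absorb the $O(1)$ grid errors and the slack between DKW's exponent $2t^2$ and the target $\lambda^2/2$ into the constant $C_1$, and integrate the tail to get the expectation bound. The only point to tighten is your case split: the assumption $k\le n$ in your second paragraph is justified because any interval with $k>n$ has $N_I\le 1$ and is hence automatically acceptable (this, rather than the $\lambda=0$ issue, is where the indicator is truly indispensable), so your first-paragraph dismissal should leave you with $N_I\ge 2$, equivalently $k\le n$, not merely $N_I\ge 1$.
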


\begin{proof}
Let \(n_0=\#\{X_i\in I\}\) at the time \(I\) is processed. Since \(I\) is equalized, \(n_0\) equals the number of grid points \(\{0,1/n,\dots,(n-1)/n\}\) in \(I\). Hence,
\begin{equation}\label{eq:n0-two-sided}
\Bigl|\,n_0 - \frac{n}{k}\Bigr|\le 1.
\end{equation}
If $k \geq n$, the interval $I$ has length $1/k \leq 1/n$ and so $n_0 \le 1$. With either zero or one sample point, the discrepancy trivially satisfies $d_Y(I) \le 1/n \le 1/m$, so $I$ is automatically acceptable.
\smallskip%

Consider the case where \(k<n\) now. Without loss of generality, assume that \(I = [0,1/k] \). DKW inequality \eqref{eq: DKW} for $n_0$ iid points sampled from the uniform distribution on $[0,1/k]$ implies that with probability at least $1-2e^{-2t^2}$, the following ``good event'' holds:
\[
\left|
\#\{X_i \leq x \} - n_0\,x\,k 
\right| \leq t \sqrt{n_0}
\quad \text{for any } x \in [0,1/k].
\]
On the other hand, by triangle inequality,
\begin{multline*}
\left|
\#\{X_i \leq x \} - n\,x 
\right| \leq 
\left|
\#\{X_i \leq x \} - n_0\,x\,k 
\right| + 
\left|
n_0\,x\,k - n\, x
\right| = \\
 = \left|
\#\{X_i \leq x \} - n_0\,x\,k 
\right| + 
x\,k \left|
n_0 - n/k
\right| \stackrel{\eqref{eq:n0-two-sided}}{\leq} 
\left|
\#\{X_i \leq x \} - n_0\,x\,k 
\right| + 
x\,k \leq \\
\leq 
\left|
\#\{X_i \leq x \} - n_0\,x\,k 
\right| + 1.
\end{multline*}
Then, on the good event, we get:
\[
\left|
\#\{X_i \leq x \} - n\,x 
\right| \leq t \sqrt{n_0} + 1.
\]
On the other hand, the grid points $Y_i$ trivially satisfy:
\[
\abs*{\#\{Y_i \leq x\} - n\,x}
\le 1.
\]
Since these two bounds hold for all $x \in I$, combining them gives:
\begin{align*}
n\, d_Y(I)
&= \sup_{x \in I} \abs*{\#\{X_i \leq x\} - \#\{Y_i \leq x\}}
\le t \sqrt{n_0} + 2
\stackrel{}{\le}
t \sqrt{2n/k} + 2 \\
&\le t \sqrt{2n/k} + 2 \sqrt{n/k}
    \quad \text{(by the assumption $k<n$)}\\
&= (t\sqrt{2}+2)\sqrt{n/k}
= \lambda \sqrt{n/k},
\end{align*}
where we can assume that $\lambda \ge 2$ to ensure that $t \ge 0$ (otherwise we can make the  bound \eqref{tail} trivial by adjusting $C_1$.)

Summarizing, we showed that if the good event occurs, then $d_Y(I) \le \lambda/\sqrt{nk}$, and the good event occurs with probability at least  
\[
1 - 2 e^{-2\,t^2} = 
1 - 2 e^{-(\lambda-2)^2} 
\geq 1 - C_1 e^{-\lambda^2/2}
\]
for some $C_1$. This completes the proof. \qed
\end{proof}
\smallskip%

Let \(k_0\) be the (not necessarily integer) number such that 
\[
\sqrt{n k_0} = m.
\]
It then suffices to show that
\[
\E M_2 = O\bigl(\sqrt{n\,k_0}\bigr),
\]
since, by Lemma \ref{M1M2}, the bound on \(\E M_1\) then follows. 
\smallskip%

For each interval \(I\) encountered in Algorithm 2, let
\[
m(I) = \#\{\text{points moved between 
$I_1$ and $I_2$ by Algorithm 2}\}.
\]
\begin{lemma}\label{m(I)}
For each interval \(I\) encountered in Algorithm 2,
\[
m(I) \le 2\,n\,d_Y(I) + 1.
\]
\end{lemma}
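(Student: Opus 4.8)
The plan is to evaluate $m(I)$ exactly and then to recognize it, up to the factor $1/n$, as the value of the discrepancy functional $d_Y(I)$ at a single admissible test interval, namely the left half of $I$. Write $I=[a,b]$, let $c=(a+b)/2$ be its midpoint, and set $I_1=[a,c]$ and $I_2=[c,b]$. Using the snapshot attached to the node $I\in T_2$, let $n_j=\#\{X_i\in I_j\}$ and $g_j=\#\{Y_k\in I_j\}$ denote the numbers of sample points and of grid points in $I_j$ at the moment $I$ is processed.

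First I would record the invariant that $I$ is \emph{equalized} when Algorithm~2 processes it: this holds for the root $[0,1]$ and is preserved at each subdivision, since the algorithm equalizes the two halves it creates. Hence $n_1+n_2=g_1+g_2$, which means the two halves can be equalized \emph{simultaneously} by transporting points only between $I_1$ and $I_2$: if $n_1>g_1$ one moves the $n_1-g_1$ surplus points from $I_1$ into $I_2$, and the resulting count in $I_2$ becomes $n_2+(n_1-g_1)=g_2$ automatically. Thus the number of transported points is exactly
\[
m(I)=\abs{n_1-g_1}=\abs{n_2-g_2}.
\]
The odd-$n$ hypothesis guarantees that no grid point sits at the shared boundary $c$, and almost surely no (possibly already moved) sample point sits at a dyadic point, so these counts are unambiguous and the identity is exact.

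Second I would observe that $I_1$ is an admissible competitor in the supremum defining $d_Y(I)$, since $I_1\subseteq I$ and $I_1$ shares the left endpoint $a$ of $I$. Evaluating the discrepancy functional at $J=I_1$ therefore gives
\[
d_Y(I)\;\ge\;\abs*{\frac{\#\{X_i\in I_1\}}{n}-\frac{\#\{Y_k\in I_1\}}{n}}=\frac{\abs{n_1-g_1}}{n}=\frac{m(I)}{n},
\]
so in fact $m(I)\le n\,d_Y(I)$, which is already stronger than the asserted bound $m(I)\le 2\,n\,d_Y(I)+1$; the extra slack in the statement is harmless.

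The only step requiring genuine care — and the one I expect to be the crux — is the identity $m(I)=\abs{n_1-g_1}$, i.e.\ that equalizing both halves costs precisely the surplus of one of them. This in turn rests on $I$ being exactly equalized at snapshot time, so I would make the invariant explicit, arguing by induction on the recursion depth and checking the boundary bookkeeping at $a$, $b$, and $c$ (no grid point at $c$ for odd $n$; the endpoints $a,b$ counted consistently in $I$ and in its halves). Once this identity is secured, the test-interval inequality is immediate and no delicate estimate is needed; the constants $2$ and $+1$ merely provide comfortable room.
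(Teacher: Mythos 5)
Your proof is correct, and it is in fact sharper than the paper's. Both arguments rest on the same two ingredients --- the invariant that \(I\) is equalized at the moment it is processed, and the observation that the left half \(I_1\) is an admissible test interval in the supremum defining \(d_Y(I)\) --- but the accounting differs. The paper writes the moved count as the inter-half imbalance \(\bigl|\#\{X_i\in I_1\}-\#\{X_i\in I_2\}\bigr|\), expands this as \(\bigl|2(\#\{X_i\in I_1\}-\#\{Y_k\in I_1\})+\#\{Y_k\in I_1\}-\#\{Y_k\in I_2\}\bigr|\), and finishes with the triangle inequality plus \(\bigl|\#\{Y_k\in I_1\}-\#\{Y_k\in I_2\}\bigr|\le 1\); that detour is exactly where the factor \(2\) and the \(+1\) come from. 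You instead identify the moved count exactly: since \(\#\{X_i\in I\}=\#\{Y_k\in I\}\), transporting the surplus \(|\#\{X_i\in I_1\}-\#\{Y_k\in I_1\}|\) of one half automatically equalizes the other, so \(m(I)=|\#\{X_i\in I_1\}-\#\{Y_k\in I_1\}|\), and a single evaluation of the discrepancy functional at \(J=I_1\) yields \(m(I)\le n\,d_Y(I)\) with no factor \(2\) and no additive \(1\). Your accounting is also the more precise one: the paper's displayed \emph{equality} \(m(I)=|\#\{X_i\in I_1\}-\#\{X_i\in I_2\}|\) actually overstates the number of relocated points by roughly a factor of \(2\) (the true count is half the imbalance, up to the grid defect \(|\#\{Y_k\in I_1\}-\#\{Y_k\in I_2\}|\le 1\)), though since only an upper bound is needed this costs nothing but the weaker constant. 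Either version suffices downstream, where \(m(I)\lesssim n\,d_Y(I)\) is fed into the bound on \(\E M_2\) and all constants are absorbed.
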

\begin{proof}
Since \(I\) is equalized, it follows 
that
\[
m(I) = \left| 
\#\{X_i \in I_1 \} - \#\{X_i \in I_2 \}
\right| = 
\left| 
2 \left(
\#\{X_i \in I_1 \} - \#\{Y_i \in I_1 \}
\right) + 
\#\{Y_i \in I_1 \} - \#\{Y_i \in I_2 \}
\right|.
\]
Consequently,
\[
m(I) \leq 2\,n\,d_{Y}(I_1) + 
\left|
\#\{Y_i \in I_1 \} - \#\{Y_i \in I_2 \}
\right| \leq 
2\,n\,d_{Y}(I) + 1,
\]
since $I_1$ and $I_2$ have the same length. \qed
\end{proof}
\smallskip%

Each interval \(I\) in Algorithm~2 has length \(|I| = 1/k\) for some dyadic integer \(k \in \{1,2,4,\ldots\}\). Hence, the total number of replacements is given by
\[
M_2 \leq \sum_{k = 1, 2, 4, \ldots} \; \sum_{\substack{|I| = 1/k}} m(I) \cdot \mathbf{1}_{\{ I \text{ is unacceptable} \}}.
\]
Here the second summation is over 
dyadic intervals of the form 
\([j/k, (j+1)/k], j = 0,1,\ldots, k-1\).
\smallskip%

For each unacceptable $I$, we have $nd_Y(I) > n/m \ge 1$, so Lemma \ref{m(I)} gives $m(I) \le 3\,n\,d_{Y}(I)$. Then, for each \(I\) with \(|I| = 1/k\), we have
\[
\mathbb{E}[
m(I) 
\cdot \mathbf{1}_{
\{ I \text{ is unacceptable}\}
}
] 
\leq
\mathbb{E}[
3\,n\, 
d_{Y}(I) 
\cdot \mathbf{1}_{\{ I 
\text{\normalfont{ is unacceptable}} \}}
] = 
3\,n\,\mathbb{E}[\delta(I)]
\stackrel{\eqref{E}}{\leq} 3\,C_2 \sqrt{n/k}.
\]
It is now convenient to treat the cases \(k \le k_0\) and \(k > k_0\) separately. First consider the case when $k \leq k_0$. Since the algorithm encounters at most $k$ 
intervals of length $1/k$, it follows that
\[
\mathbb{E}\left[ 
\sum_{k \leq k_0} \sum_{|I| = 1/k} m(I) 
\cdot \mathbf{1}_{\{ I \text{\normalfont{ is unacceptable}}\}}
\right] \leq 
\sum_{k \le k_0} 3\,C_2 \sqrt{n k} = 
3\,C_2 \sqrt{n} \sum_{k \le k_0} \sqrt{k} = 
O(\sqrt{n k_0}),
\]
where the latter equality follows since 
the sum is over dyadic values of \(k\). Next, consider the case when $k > k_0$. Since
\[
m(I) 
\cdot \mathbf{1}_{\{ I \text{\normalfont{ is unacceptable}}\}} \leq 
3\,n\,d_Y(I) \cdot \mathbf{1}_{\{ I \text{\normalfont{ is unacceptable}}\}} = 
3\,n\,\delta(I) \cdot \mathbf{1}_{\{ I \text{\normalfont{ is unacceptable}}\}},
\]
it follows from Cauchy–Schwarz inequality 
that for each 
\(I\) with \(|I| = 1/k\),
\[
\mathbb{E}[
m(I) 
\cdot \mathbf{1}_{\{ I \text{\normalfont{ is unacceptable}}\}}
] \leq 
3\,n
\mathbb{E}[
\delta(I)^2 
]^{1/2} \cdot 
\mathbb{E}[
\mathbf{1}_{\{ I \text{\normalfont{ is unacceptable}}\}}
]^{1/2}.
\]
From the tail bound \eqref{tail}, we find for each \(I\) with \(|I|=1/k\),
\begin{equation}\label{E^2}
\mathbb{E}[\delta(I)^2] \leq \frac{C_3}{n k}
\end{equation}
for some constant $C_3 > 0$. On the other hand, for $k > k_0$, 
\[
\mathbb{E}[
\mathbf{1}_{\{ I \text{\normalfont{ is unacceptable}}\}} 
]^{1/2} 
= \Prob*{ \delta(I) > \frac{1}{\sqrt{n k_0}} }^{1/2} = 
\Prob*{ \delta(I) > \frac{\sqrt{k/k_0}}{\sqrt{n k}} }^{1/2} \stackrel{\eqref{tail}}{\leq} 
\sqrt{C_1} 
e^{-(k/k_0)/4},
\]
where we used \eqref{tail} 
with \(\lambda = \sqrt{k/k_0}\). Consequently, for each $k > k_0$,
\[
\mathbb{E}[
m(I) 
\cdot \mathbf{1}_{\{ I \text{\normalfont{ is unacceptable}}\}}
] \leq 3 \sqrt{C_1\, C_3} \sqrt{n/k} e^{-(k/k_0)/4}.
\]
Since the algorithm encounters at most $k$ intervals of length $1/k$, summing over all of them and then over numbers $k>k_0$ that are powers of $2$, we get
\[
\mathbb{E}\left[ 
\sum_{k > k_0} \sum_{|I| = 1/k} m(I) 
\cdot \mathbf{1}_{\{ I \text{\normalfont{ is unacceptable}}\}}
\right] \leq 
3 \sqrt{C_1\, C_3} \sqrt{n} \sum_{k > k_0} \sqrt{k} 
e^{-(k/k_0)/4} = O(\sqrt{n k_0}).
\]
Hence, adding the contributions from both ranges of \(k\), we find:
\[
\E M_2 = O(\sqrt{n k_0}) + O(\sqrt{n k_0}) = 
O(\sqrt{n k_0}),
\]
and the proof of Theorem \ref{1sample} is complete.

\section{Complexity of Algorithm 1}\label{complexity}

The time complexity of Algorithm 1 is $O\left(n \ln(n) \ln (m)\right)$.
To check this, recall that each \emph{equalized} interval 
\(I\) with $|I| \leq 1/(2m)$ is necessarily acceptable. Indeed, if \(m \leq n/2\), we find:
\[
d_{Y}(I) 
\le \left| 
\frac{\#\{X_i \in I \}}{n}
\right|
=
\left| 
\frac{\#\{Y_i \in I \}}{n}
\right| \le |I| + \frac{1}{n} \leq 
\frac{1}{2 m} + \frac{1}{n} \leq 
\frac{1}{m}.
\] 
On the other hand, if \(m > n/2\), then \(|I| < 1/n\). 
Hence, \(d_{Y}(I) \leq 1/n \leq 1/m\). 
\smallskip%

Consequently, the recursion stops after at most \(O(\ln{m})\) steps. On each step, computing the empirical distribution function requires \(O(n \ln n)\) operations, and the replacement of the sample points is done in linear time. Combining these yields the overall time complexity.

\section{Proof of Proposition \ref{lower_bound}}
One can never achieve discrepancy less 
than \(1/(2n)\), so we may assume 
\(c/m \geq 1/(2n)\). This implies \(m \leq 2 c n\) 
and \(k = m^2/n\leq 4 c^2 n\), and $k$ is integer by assumption. Partition the interval \([0,1]\) into \(k\) subintervals \(I_1, I_2, \ldots, I_k\) of equal length \(1/k\). For each \(I_j\), 
set:
\[
\nu_j = \#\{X_i \in I_j\}.
\]
We shall say that $I_j$ is \emph{regular} if 
$$
\nu_j \le n/k + 0.5 \sqrt{n/k}.
$$
In order to achieve discrepancy $D_n < 1/(4m)$, all intervals $I_j$ must be regular. Indeed, by triangle inequality we have
$$
\abs*{ \frac{\#\{X_i \in I_j\}}{n} - \abs{I_j} } 
\le 2D_n
$$
so if $I_j$ is irregular then we would have
$$
2D_n \ge \frac{\nu_j}{n} - \frac{1}{k}
\ge \frac{0.5}{\sqrt{nk}}
= \frac{1}{2m}.
$$

Note that $\nu_j$ is a binomial random variable with mean $n/k \ge 1/(4 c^{2}) \ge 100$ for \(c\) small enough. Call an interval $I_j$ \emph{dense} if 
$$
\nu_j > n/k + \sqrt{n/k}.
$$
Standard results (e.g., via the Berry--Esseen theorem or direct lower bounds for the binomial tail) imply that
\[
\Prob{\text{$I_j$ is dense}} > \frac{1}{100}.
\]
Thus, the expected number of dense intervals is at least \(k/100\). To achieve the discrepancy $D_n < 1/(4m)$, we must make each interval regular, and this requires moving at least $0.5\sqrt{n/k}$ points out of each dense interval. Thus, achieving $D_n < 1/(4m)$ requires moving, on average, at least 
\[
\frac{k}{100} \cdot 0.5\sqrt{n/k} 
= \frac{\sqrt{nk}}{200} 
= \frac{m}{200}
\]
points. This completes the proof.

\bibliographystyle{plain}
\bibliography{ref}

\begin{thebibliography}{10}

\bibitem{Aist-B-Nik}
C.~Aistleitner, D.~Bilyk, and A.~Nikolov.
\newblock Tusn{\'a}dy's problem, the transference principle, and non-uniform {QMC}-sampling, 2017.

\bibitem{Aist-Dick}
C.~Aistleitner and J.~Dick.
\newblock Low-discrepancy point sets for non-uniform measures.
\newblock {\em Acta Arith.}, 163(4):345--369, 2014.

\bibitem{Beck-Chen}
J.~Beck and W.~Chen.
\newblock {\em Irregularities of Distribution}.
\newblock Cambridge University Press, 1987.

\bibitem{BMMS}
D.~Bilyk, M.~Mastrianni, R.~Matzke, and S.~Steinerberger.
\newblock Polarization and greedy energy on the sphere, 2023.

\bibitem{B-B}
D.~Bilyk and S.~Steinerberger.
\newblock Regularizing random points by deleting a few, 2025.

\bibitem{March}
M.~Boedihardjo.
\newblock Optimality of empirical measures as quantizers, 2025.

\bibitem{boedihardjo2022private}
M.~Boedihardjo, T.~Strohmer, and R.~Vershynin.
\newblock Private sampling: a noiseless approach for generating differentially private synthetic data.
\newblock {\em SIAM Journal on Mathematics of Data Science}, 4(3):1082--1115, 2022.

\bibitem{Chaz}
B.~Chazelle.
\newblock {\em The Discrepancy Method}.
\newblock Cambridge University Press, 2000.

\bibitem{CDP}
F.~Clement, C.~Doerr, and L.~Paquete.
\newblock Heuristic approaches to obtain low-discrepancy point sets via subset selection.
\newblock {\em J.\,Complexity}, 83, 2024.

\bibitem{D-K-W}
A.~Dvoretzky, J.~Kiefer, and J.~Wolfowitz.
\newblock Asymptotic minimax character of the sample distribution function and of the classical multinomial estimator.
\newblock {\em Annals of Mathematical Statistics}, 27(3):642–669, 1956.

\bibitem{DFGGR}
R.~Dwivedi, O.~Feldheim, O.~Gurel-Gurevich, and A.~Ramdas.
\newblock The power of online thinning in reducing discrepancy.
\newblock {\em Probability Theory and Related Fields}, 174(1–2):103–131, 2018.

\bibitem{Massart}
P.~Massart.
\newblock The tight constant in the {D}voretzky-{K}iefer-{W}olfowitz inequality.
\newblock {\em Annals of Probability}, pages 1269--1283, 1990.

\bibitem{Mat}
J.~Matou{\v{s}}ek.
\newblock {\em Geometric Discrepancy}.
\newblock Springer-Verlag, 1999.

\bibitem{Quatt}
F.~Quattrocchi.
\newblock Asymptotics for optimal empirical quantization of measures, 2024.

\end{thebibliography}

\end{document}